\newtheorem{theorem}{Theorem}[section]
\newtheorem{proposition}[theorem]{Proposition}
\newtheorem{corollary}[theorem]{Corollary}
\theoremstyle{definition}
\newtheorem{definition}[theorem]{Definition}
\newtheorem{example}[theorem]{Example}
\newcommand{\id}{\mathrm{id}}
\newcommand{\obj}{\mathrm{obj}}
\newcommand{\C}{\mathcal{C}}
\newcommand{\D}{\mathcal{D}}
\newcommand{\isleftadjoint}{\dashv}
\newcommand{\KG}{{\mathring{K}(Γ)}}
\newcommand{\K}{\mathring{K}}
\renewcommand{\k}{\mathring{k}}
\newcommand{\lG}{{\ell(Γ)}}
\newcommand{\pb}[3]{{#1}×_{#2}{#3}}
\newenvironment{acknowledgements}{\par\noindent\textbf{Acknowledgements.}}
\newcommand{\newcategory}[1]{\expandafter\newcommand\csname #1\endcsname{\mathbf{#1}}}
\newcommand*\savelabel[2]{%
   \immediate\write\@auxout{%
     \noexpand\global\noexpand\@namedef{mylabel@#1}{#2}}%
   #2%
}
\newcommand*\mycellref[1]{%
   \ifcsname mylabel@#1\endcsname
     \@nameuse{mylabel@#1}%
   \else
     ??%
   \fi
}
\newcounter{mycellno}[equation]
\DeclareRobustCommand{\mycell}[1]{\stepcounter{mycellno}\savelabel{#1}{(\theequation.\themycellno)}}
\newcommand{\putmycell}[2]{\ar@{}[#1]|{\mycell{#2}}}
\begin{document}
\title{Voltage lifts of graphs from a category theory viewpoint}
\begin{abstract}
We prove that the notion of a voltage graph lift comes from an adjunction between the
category of voltage graphs and the category of group labeled graphs.
\end{abstract}

\author{Gejza Jenča}
\email{gejza.jenca@stuba.sk}
\address{
Department of Mathematics and Descriptive Geometry\\
Faculty of Civil Engineering,
Slovak University of Technology,
	Slovak Republic
}
\thanks{
This research is supported by grants VEGA 2/0142/20 and 1/0006/19,
Slovakia and by the Slovak Research and Development Agency under the contracts
APVV-18-0052 and APVV-20-0069.
}
\keywords{
voltage graphs, derived graph, voltage graph lift, adjoint functors
}

\maketitle

\section{Introduction}

In this paper, a {\em graph} means a structure sometimes called a 
{\em symmetric multidigraph} -- that means that it may have multiple darts
with the same source and target, and the set of all darts of the graph is equipped 
with an involutive mapping $λ$ that maps every dart to a dart
with source and target swapped.

A {\em voltage graph} is a graph in which every dart is
labeled with an element of a group in a way that respects the involutive symmetry $λ$,
so that the label of a dart $d$ is inverse to the label of $λ(d)$.
Similarly, a {\em group labeled graph} has all vertices labeled with elements of a group. 

In \cite{gross1974voltage} Gross introduced the construction of a {\em derived
graph of a voltage graph}.  Nowadays, derived voltage graphs are called {\em
(ordinary) voltage graph lifts} -- this is the terminology we will use in the
present paper. 
Let us mention in passing that in
\cite{gross1977generating}, voltage graphs were generalized to a more general
notion of {\em permutation voltage graphs}, in which the darts are labelled
with permutations.

After their discovery, voltage graph lifts were extensively investigated in
many papers. Voltage graph lifts were applied for example in the research
concerning the degree-diameter problem
\cite{brankovic1998large,brankovic1998note}, lifting graph automorphisms
\cite{malnivc2000lifting} and several other areas of graph theory.

In the present paper, we prove that there is an adjunction
$$
\xymatrix{
\Lab
	\ar@/^1.2pc/[rr]^-L
&
\bot
&
\Volt
	\ar@/^1.2pc/[ll]^-R
}
$$
between the category  $\Volt$ of voltage graphs and a
category $\Lab$ of group labeled graphs. 
We prove that for every object $G$ of $\Volt$, the underlying graph of the voltage graph
$LR(G)$ is isomorphic to the voltage graph lift of $G$.

\section{Preliminaries}

We assume basic knowledge of category theory; for notions not explained here see
\cite{mac1998categories,riehl2016category}.

\subsection{Adjunctions}

There are several different but equivalent definitions of an adjoint
pair of functors. For our purposes, the following is the most convenient one.
\begin{definition}\cite[(ii) of Theorem IV.1]{mac1998categories}
Let $\C,\D$ be categories and let $F\colon\C\to\D$ and
$G\colon\D\to\C$ be functors. We say that {\em $F$ is left adjoint to $G$},
or that {\em $G$ is right adjoint to $F$}, in symbols $F\isleftadjoint G$, 
if there is a family 
$$
\{\epsilon_Y\colon FG(Y)\to Y\}_{Y\in\obj(\D)}
$$ of $\D$-morphisms,
such that for every $\C$-object $X$ and a $\D$-morphism $f\colon F(X)\to Y$
there is a unique $\C$-morphism $u\colon X\to G(Y)$ such that 
\begin{equation*}
\xymatrix{
F(X)
	\ar[rd]^{f}
	\ar@{.>}[d]_{F(u)}
\\
FG(Y)
	\ar[r]_-{\epsilon_Y}
&
Y
}
\end{equation*}
commutes.
\end{definition}

The family $\{\epsilon_Y\}_{Y\in\obj(\D)}$ then forms a natural transformation of functors 
$\epsilon\colon FG\to\id_{\D}$, called {\em the counit of the adjunction} $F\isleftadjoint G$.

An important fact concerning the notion of an adjoint pair of functors is that each of the functors
$F$, $G$ determines the other one and the counit, up to isomorphism.

We will need another (perhaps more familiar) characterization of an adjoint pair of
functors. For objects $O_1,O_2$ of a category $\mathcal E$, write $\mathcal E(O_1,O_2)$ for the
set of all morphisms from $O_1$ to $O_2$ in $\mathcal E$.
Let $\C,\D$ be categories, let $F\colon\C\to\D$ and
$G\colon\D\to\C$ be functors. Then $F\isleftadjoint G$ if and only if there is
a bijection, natural in $X$ and $Y$,
\[
\C(F(X),Y)\simeq \D(X,G(Y)).
\]
See \cite[section IV.1]{mac1998categories}.

\subsection{Pullbacks}

Let $f\colon X\to A$, $q\colon B\to A$ be a pair of morphisms in a category $\C$ with a common codomain $A$,
sometimes called a {\em cospan in $\C$}
\begin{equation}
\label{diag:cleavage}
\xymatrix{
~
&
X
	\ar[d]^f
\\
B
	\ar[r]_q
&
A
}
\end{equation}
Then a {\em pullback} is the limit of this diagram.
In other words, it is an object (denoted by $X×_A B$) 
equipped with  morphisms $q^*(f)\colon X×_A B\to B$ and $f^*(q)\colon X×_A B\to X$
such that the square 
\mycellref{cell:pullback} in the diagram
\begin{equation}
\label{diag:pullback}
\xymatrix{
V
	\ar@/^1.4pc/[rrd]^{v_X}
	\ar@/_1.4pc/[rdd]_{v_B}
	\ar@{.>}[rd]^{u}
	\putmycell{rrd}{cell:toptrianglepullback}
	\putmycell{rdd}{cell:bottomtrianglepullback}
\\
~
&
X×_A B
	\ar[r]^-{f^*(q)}
	\ar[d]_{q^*(f)}
	\ar@{}[rd]|{\mycell{cell:pullback}}
&
X
	\ar[d]^f
\\
~
&
B
	\ar[r]_q
&
A
}
\end{equation}
commutes and for every object $V$ and a pair of morphisms $v_X\colon V\to X$ and
$v_B\colon V\to B$ such the outer square of the diagram
\eqref{diag:pullback} commutes,
there is a unique morphism $u\colon V\to X×_A B$ such that both triangles
\mycellref{cell:toptrianglepullback} and
\mycellref{cell:bottomtrianglepullback} commute.
We say that $q^*(f)$ is a {\em pullback of $f$ along $q$} and that 
$f^*(q)$ is a {\em pullback of $q$ along $f$}.

Let us describe pullbacks in the usual category of sets and mappings,
denoted by $\Set$.
\begin{example}
Consider a diagram of shape \eqref{diag:cleavage} in $\Set$.
A pullback $X×_A B$ can be constructed
as a subset of the direct product of sets
$X×B$, given by
$$
X×_A B=\{(x,b):f(x)=q(b)\}
$$
and the maps $f^*(q)$ and $q^*(f)$ are the projections: 
$$
f^*(q)(x,b)=x\qquad q^*(f)(x,b)=b.
$$
Note that, whenever $A$ is a singleton, $f(x)=q(b)$ for all pairs
$(x,b)\in X×B$, so in this case $X×_A B=X×B$.
\end{example}

\subsection{Graphs}

A {\em graph} is a quintuple $G=(V,D,s,t,λ)$, where
\begin{itemize}
\item $D$ is the {\em set of darts of $G$}
\item $V$ is the {\em set of vertices of $G$}
\item $s,t\colon D\to V$ are the {\em source and target maps}, respectively.
\item $λ\colon D\to D$ is a mapping such that $λ\circλ=\id_D$.
\item $s\circλ=t$.
\end{itemize}
The mapping $λ$ is called the {\em dart-reversing involution} of $G$.
Note that $t\circλ=s\circλ\circλ=s\circ\id_D=s$.

All the data in a graph $(V,D,s,t,λ)$ can be expressed graphically by a commutative diagram:
\begin{equation}
\xymatrix{
D
	\ar@/^/[rr]^{\id_D}
	\ar[rd]^{λ}
	\ar@/_/[rdd]_{s}
&
~
&
D
	\ar@/^/[ldd]^{s}
\\
~
&
D
	\ar[d]^-{t}
	\ar[ru]^{λ}
\\
~
&
V
}
\label{diag:G}
\end{equation}

We write $V(G)$ for the set of vertices of $G$ and $D(G)$ for the set of darts of $G$. 
Usually we will identify $G$ with the pair $(V(G),D(G))$ and discard $s,t,λ$ from
the signature. We say that $s,t,λ$ are the {\em structure maps} of $G$.

Note that $λ$ comes from an action of $\mathbb Z_2$ on $D$. The orbits of $λ$ are the
{\em edges of $G$}. We write $E(G)$ for the set of all edges. There are three types of
edges $\{d,λ(d)\}$:
\begin{description}
\item [semiedges] $λ(d)=d$;
\item [loops] non-semiedges with $s(d)=t(d)$;
\item [links] all the other edges, that means $λ(d)\neq d$ and $s(d)\neq t(d)$.
\end{description}

A morphism of graphs $f\colon G\to H$ is a pair of mappings $(f^V,f^D)$, where $f^V\colon
V(G)\to V(H)$ and $f^D\colon D(G)\to D(H)$ are such that for every dart $d\in D(G)$,
$s(f^D(d))=f^V(s(d))$, $t(f^D(d))=f^V(t(d))$ and $λ(f^D(d))=f^D(λ(d))$.
Clearly, graphs equipped with morphisms form a category, denoted by $\Graph$. 

\subsection{Graphs are functors}
As outlined above, every graph is a diagram in $\Set$. This can be formulated as follows:
a graph is a functor from a certain finite category $\gph$ to the category
$\Set$.  This category $\gph$ has two objects $\{D,V\}$, and three non-identity morphisms
$\{s,t,λ\}$ that behave as in the diagram \eqref{diag:G}. The morphisms of graphs
can then be represented as natural transformations of functors from $\gph$ to $\Set$, so
the category $\Graph$ can be identified with a category of functors $[\gph,\Set]$.

\subsection{Pullbacks of graphs}
Since graphs are functors, it follows that limits/colimits in $\Graph$ can be computed pointwise: we can
compute a limit/colimit separately for vertices and darts and then equip the resulting
sets with structure maps to obtain a graph.

In particular, given a pair of morphisms $f_1,f_2$
$$
\xymatrix{
~
&
G_1
	\ar[d]^{f_1}
\\
G_2
	\ar[r]_{f_2}
&
H
}
$$
in $\Graph$, we can compute the pullback simply as
\begin{align*}
V(\pb{G_1}{H}{G_2})&=\pb{V(G_1)}{V(H)}{V(G_2)}\\
D(\pb{G_1}{H}{G_2})&=\pb{D(G_1)}{D(H)}{D(G_2)}\\
s(d_1,d_2)&=(s(d_1),s(d_2))\\
t(d_1,d_2)&=(t(d_1),t(d_2))\\
λ(d_1,d_2)&=(λ(d_1),λ(d_2)).
\end{align*}
The projections $f_1^*(f_2)$, $f_2^*(f_1)$ in 
$$
\xymatrix{
\pb{G_1}{H}{G_2}
	\ar[r]^-{f_2^*(f_1)}
	\ar[d]_{f_1^*(f_2)}
&
G_1
	\ar[d]^{f_1}
\\
G_2
	\ar[r]_{f_2}
&
H
}
$$
are computed in the obvious way:
\begin{align*}
(f_2^*(f_1))^V(v_1,v_2)&=v_1&(f_2^*(f_1))^D(d_1,d_2)&=d_1\\
(f_1^*(f_2))^V(v_1,v_2)&=v_2&(f_1^*(f_2))^D(d_1,d_2)&=d_2.
\end{align*}

\subsection{Group labeled graphs}

A {\em group labeled graph} 
is a triple $(G,Γ,β)$,
where $G$ is a graph, $Γ$ is a group and $β\colon V(G)\to Γ$ is a mapping, called a {\em $Γ$-labeling on
$G$}.

A morphism of group labeled graphs $(G,Γ,β)\to (G',Γ',β')$ is a
pair $(f,h)$, where $f\colon G\to G'$ is a morphism of graphs and
$h\colon Γ\to Γ'$ is a morphism of groups such that, for all $v\in V(G)$,
$h(β(v))=β'(f^V(v))$. The composition of morphisms is defined in a straightforward way:
$(f_1,h_1)\circ(f_2,h_2)=(f_1\circ f_2,h_1\circ h_2)$.
Clearly, the class of all group labeled graphs equipped with their morphisms
forms a category, which we denote by $\Lab$.

Let $X$ be a set. Let $\k(X)$ be the complete graph with semiedges on the vertex set $X$,
that means, a graph with $V(\k(X))=X$, $D(\k(X))=X×X$, and structure maps
$s(x_1,x_2)=x_1$, $t(x_1,x_2)=x_2$ and $λ(x_1,x_2)=(x_2,x_1)$. 
Clearly, $\k$ is a functor from $\Set$ to $\Graph$. 

Let us write $U\colon\Grp\to\Set$ for the ``forgetful'' functor that maps
a group to its underlying set and denote $\K=\k\circ U$, so that $\KG$ is the complete
graph with semiedges with vertices labelled by the elements of the group $Γ$.
\begin{proposition}
\label{prop:Kisrightadjoint}
The functor $\K$ is a right adjoint.
\end{proposition}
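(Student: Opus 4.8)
The plan is to exhibit $\K$ as a composite of two right adjoints and then read off the conclusion. Since $\K=\k\circ U$ and $U\colon\Grp\to\Set$ is the forgetful functor, which has the free-group functor $F\colon\Set\to\Grp$ as a left adjoint (so $F\isleftadjoint U$), it suffices to prove that $\k\colon\Set\to\Graph$ is itself a right adjoint. I would then conclude by composing the two adjunctions, most cleanly by chaining their hom-set bijections.

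The heart of the argument is to show that $\k$ is right adjoint to the vertex-set functor $V\colon\Graph\to\Set$ sending a graph $G$ to $V(G)$. For this I would analyze an arbitrary graph morphism $f=(f^V,f^D)\colon G\to\k(X)$. Because $D(\k(X))=X\times X$ with $s(x_1,x_2)=x_1$ and $t(x_1,x_2)=x_2$, the source- and target-compatibility conditions force $f^D(d)=(f^V(s(d)),f^V(t(d)))$ for every dart $d$, so $f^D$ is completely determined by $f^V$. Conversely, given any map $g\colon V(G)\to X$, setting $f^V=g$ and $f^D(d)=(g(s(d)),g(t(d)))$ yields a morphism: the source/target conditions hold by construction, and the compatibility $\lambda(f^D(d))=f^D(\lambda(d))$ holds automatically because $s\circ\lambda=t$ and $t\circ\lambda=s$. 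This establishes a bijection
\[
\Graph(G,\k(X))\cong\Set(V(G),X),
\]
and verifying that it is natural in $G$ and $X$ (both sides act by pre- and post-composition in the evident way) gives $V\isleftadjoint\k$.

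With both pieces in hand, I would compose the two bijections:
\[
\Graph(G,\KG)\cong\Set(V(G),U(Γ))\cong\Grp(F(V(G)),Γ),
\]
natural in $G$ and $Γ$, exhibiting $F\circ V$ as a left adjoint of $\K$ and hence $\K$ as a right adjoint. I do not expect a serious obstacle here; the one point requiring genuine care is the \emph{rigidity} observation that a morphism into a complete graph with semiedges is pinned down by its effect on vertices, which is precisely what makes $\k$ right adjoint to $V$. The remaining verifications — naturality of each bijection and the standard freeness giving $F\isleftadjoint U$ — are routine.
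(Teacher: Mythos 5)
Your proof is correct and follows essentially the same route as the paper: establish $V\isleftadjoint\k$ via the bijection $\Graph(G,\k(X))\cong\Set(V(G),X)$, invoke the free-group adjunction $F\isleftadjoint U$, and compose the two right adjoints. The only difference is that you spell out the determination of $f^D$ by $f^V$, which the paper dismisses as obvious.
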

\begin{proof}
Obviously, for every set $X$ and a graph $G$,
\[
\Set(V(G),X)\simeq\Graph(G,\k(X)),
\]
hence $V\isleftadjoint\k$.
It is well known that $U$ is a right adjoint functor with $F\isleftadjoint U$,
where the left adjoint $F\colon\Set\to\Grp$ maps every set $X$ to the free group generated by $X$.
Right adjoint functors are closed with respect to composition, hence $\K=\k\circ U$
is a right adjoint.
\end{proof}
\begin{corollary}
\label{coro:Kproduct}
For every pair $Γ_1,Γ_2$ of groups,
$\K(Γ_1×Γ_2)\simeq \K(Γ_1)×\K(Γ_2)$.
\end{corollary}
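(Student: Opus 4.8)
The plan is to derive this as an immediate instance of the fact that right adjoints preserve limits. By \cref{prop:Kisrightadjoint}, the functor $\K$ is a right adjoint, and a standard result of category theory \cite{mac1998categories} guarantees that any functor possessing a left adjoint preserves all limits that exist in its domain. Since products are a special case of limits, $\K$ carries products in $\Grp$ to products in $\Graph$, and the statement falls out directly.

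Concretely, I would first recall that the categorical product of $Γ_1$ and $Γ_2$ in $\Grp$ is the direct product $Γ_1×Γ_2$ together with the two coordinate projection homomorphisms, this being the usual description of binary products of groups. Next I would note that binary products exist in $\Graph$: since $\Graph$ is identified with the functor category $[\gph,\Set]$ and $\Set$ is complete, limits in $\Graph$ are computed pointwise (as already observed when describing pullbacks of graphs), so $\K(Γ_1)×\K(Γ_2)$ is a well-defined graph. Applying the limit-preservation property of $\K$ to the product cone $Γ_1\leftarrow Γ_1×Γ_2\rightarrow Γ_2$ then yields a product cone $\K(Γ_1)\leftarrow\K(Γ_1×Γ_2)\rightarrow\K(Γ_2)$ in $\Graph$; because any two products of the same pair of objects are canonically isomorphic, we conclude $\K(Γ_1×Γ_2)\simeq\K(Γ_1)×\K(Γ_2)$.

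The only point that requires care -- and it is a matter of bookkeeping rather than a genuine obstacle -- is to keep the two meanings of the symbol $×$ apart: on the left it denotes the product in $\Grp$, while on the right it denotes the product in $\Graph$. No explicit manipulation of darts and vertices is needed. Should one prefer a hands-on argument, one could instead unwind the definition $\K(Γ)=\k(U(Γ))$ and verify the universal property of the graph product directly, but invoking preservation of limits makes such a computation redundant.
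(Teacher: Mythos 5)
Your proposal is correct and follows exactly the paper's argument: the paper proves this corollary in one line by citing \Cref{prop:Kisrightadjoint} and the fact that right adjoint functors preserve limits. Your additional remarks (pointwise limits in $[\gph,\Set]$, the two meanings of $×$) are sound elaborations of the same proof, not a different route.
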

\begin{proof}
Every right adjoint functor preserves limits.
\end{proof}

A $Γ$-labeling $β$ on
a graph $G$ is the same thing as a morphism of graphs $G\to\KG$. Moreover, a morphism 
$(f,h)\colon (G,Γ,β)\to(G',Γ',β')$ in $\Lab$ can be identified with a commutative square in $\Graph$
$$
\xymatrix{
G
	\ar[r]^{f}
	\ar[d]_{β}
&
G'
	\ar[d]^{β'}
\\
\KG
	\ar[r]_{\K(h)}
&
\K(Γ')
}
$$
Composition of morphisms in $\Lab$ corresponds to horizontal pasting of such commutative squares. This shows that the
category $\Lab$ is isomorphic to the {\em comma category} $\Graph\downarrow\K$, see
\cite[Section II.6]{mac1998categories}.

\subsection{Voltage graphs}

A {\em voltage graph} is a triple $(G,Γ,α)$,
where $G$ is a graph and $α\colon D(V)\to Γ$ is a mapping such that 
$α(λ(d))=(α(d))^{-1}$, called {\em a $Γ$-voltage on $G$}.

A morphism of voltage graphs $(G,Γ,α)\to(G',Γ',α')$ is a pair $(f,h)$,
where $f\colon G\to G'$ is a morphism of graphs and $h\colon Γ\to Γ'$
is a morphism of groups such that,
for all $d\in D(G)$, $h(α(d))=α'(f^D(d))$. The composition is defined similarly
as in $\Lab$.
The class of all voltage graphs equipped with morphisms of voltage graphs forms a category,
which we denote by $\Volt$.

Similarly as for $\Lab$, it is possible to represent $\Volt$ as a certain
category of morphisms in $\Graph$. Indeed, consider the digraph $\lG$ with a single
vertex $v$ and $D(\lG)=Γ$. Both $s$ and $t$ are just constant maps with the constant $v$ and $λ\colon D(\ell(Γ))\to D(\ell(Γ))$ is given by $λ(a)=a^{-1}$; $\ell$ is
then a functor from $\Grp$ to $\Graph$. Note that the edge $\{a,λ(a)\}$ of
$\ell(Γ)$ is a semiedge for $a=a^{-1} $, otherwise it is a loop.
\begin{proposition}
\label{prop:ellisrightadjoint}
$\ell$ is a right adjoint functor.
\end{proposition}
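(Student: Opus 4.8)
The plan is to produce an explicit left adjoint $P\colon\Graph\to\Grp$ together with a bijection $\Grp(P(G),\Gamma)\simeq\Graph(G,\lG)$ that is natural in $G$ and in $\Gamma$; by the hom-set characterization of adjunctions recalled above, this is exactly what it means for $P\isleftadjoint\ell$, and hence for $\ell$ to be a right adjoint.

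First I would unwind the right-hand side. Because $\lG$ has a single vertex $v$, the vertex component of any graph morphism $G\to\lG$ is forced to be the constant map at $v$, and the conditions $s(f^D(d))=f^V(s(d))$ and $t(f^D(d))=f^V(t(d))$ then hold automatically. Thus a graph morphism $G\to\lG$ amounts to nothing more than a map $\phi\colon D(G)\to\Gamma$ satisfying $\phi(\lambda(d))=\phi(d)^{-1}$ for every dart $d$ — that is, a $\Gamma$-voltage on $G$. (In particular $\Volt\cong\Graph\downarrow\ell$, in perfect analogy with the identification $\Lab\cong\Graph\downarrow\K$ established above.) This universal description dictates the left adjoint: I would let $P(G)$ be the group presented by the generating set $D(G)$ subject to the relations $\lambda(d)\cdot d=e$ for all $d\in D(G)$. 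A graph morphism $f\colon G\to G'$ satisfies $f^D\circ\lambda=\lambda\circ f^D$, so sending each generator $d$ to $f^D(d)$ carries relators to relators and yields a homomorphism $P(f)\colon P(G)\to P(G')$; one checks in the usual way that this makes $P$ a functor.

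The bijection is then read off from the defining property of a presentation: a homomorphism $P(G)\to\Gamma$ is the same datum as a map $D(G)\to\Gamma$ that sends each relator to the identity, and $\phi(\lambda(d))\phi(d)=e$ is precisely the voltage condition $\phi(\lambda(d))=\phi(d)^{-1}$ (note that for a semiedge, where $\lambda(d)=d$, this correctly forces $\phi(d)^{2}=e$). Hence both $\Grp(P(G),\Gamma)$ and $\Graph(G,\lG)$ are identified with the set of $\Gamma$-voltages on $G$, and the two identifications agree. I do not anticipate a genuine obstacle: the only points needing care will be the well-definedness of $P$ on morphisms (handled by the remark that $f^D$ commutes with $\lambda$) and the naturality of the bijection in each variable — naturality in $\Gamma$ being postcomposition with $h$ on one side and with the dart map of $\ell(h)$ on the other, and naturality in $G$ following from the functoriality of $P$. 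Both are routine. Alternatively, and in closer parallel to \Cref{prop:Kisrightadjoint}, one could exhibit $\ell$ as the composite of the functor sending a group to its underlying set-with-involution (via inversion) followed by the functor sending a set-with-involution to the single-vertex graph on it, observe that each factor is a right adjoint, and invoke the closure of right adjoints under composition.
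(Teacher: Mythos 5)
Your proposal is correct and takes essentially the same route as the paper: your $P$ is precisely the paper's left adjoint $F_{\ell}$ (generators $D(G)$, relations $d\cdot\lambda(d)=1$), and your closing alternative---factoring $\ell$ through sets-with-involution---is exactly the paper's factorization through $\Act(\mathbb Z_2)$. You simply spell out the hom-set bijection and naturality that the paper dismisses as ``easy to check.''
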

\begin{proof}
The proof is very similar to the proof of \Cref{prop:Kisrightadjoint}, however one needs
to replace the intermediate category $\Set$ with the category $\Act(\mathbb Z_2)$ of actions of $\mathbb Z_2$ equipped
with equivariant maps. 
The functor $F_{\ell}\colon\Graph\to\Grp$ takes a graph to the group
with the set of generators $D(G)$ and the set of relations given by $d.\lambda(d)=1$, for
all $d\in D(G)$ and it is easy to check that $F_{\ell}\isleftadjoint\ell$.
\end{proof}

\begin{corollary}
\label{coro:ellproduct}
For every pair $Γ_1,Γ_2$ of groups,
$\ell(Γ_1×Γ_2)\simeq \ell(Γ_1)×\ell(Γ_2)$.
\end{corollary}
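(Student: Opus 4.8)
The plan is to argue exactly as in the proof of \Cref{coro:Kproduct}, since the essential work has already been carried out in \Cref{prop:ellisrightadjoint}. First I would recall that the product $Γ_1\times Γ_2$ is, by definition, a limit in $\Grp$ --- namely the limit of the discrete diagram consisting of the two objects $Γ_1$ and $Γ_2$ --- equipped with its two projection morphisms onto $Γ_1$ and $Γ_2$.

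Next I would invoke the standard theorem that every right adjoint functor preserves limits. By \Cref{prop:ellisrightadjoint}, $\ell$ is a right adjoint, so applying $\ell$ to the product cone in $\Grp$ yields a limit cone in $\Graph$ over the discrete diagram on $\ell(Γ_1)$ and $\ell(Γ_2)$. Since a limit over a two-object discrete diagram is precisely a binary product, this cone exhibits $\ell(Γ_1\times Γ_2)$ as a product of $\ell(Γ_1)$ and $\ell(Γ_2)$, which gives the desired isomorphism $\ell(Γ_1\times Γ_2)\simeq\ell(Γ_1)\times\ell(Γ_2)$.

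There is essentially no obstacle here: the whole content is packaged into \Cref{prop:ellisrightadjoint}, so the clean argument is a one-line appeal to preservation of limits by right adjoints. If one preferred to avoid the preservation theorem, I would instead verify the isomorphism by hand using the pointwise description of products in $\Graph$ from the subsection on pullbacks of graphs: both sides have a single vertex, the dart sets satisfy $D(\ell(Γ_1\times Γ_2))=Γ_1\times Γ_2=D(\ell(Γ_1))\times D(\ell(Γ_2))=D(\ell(Γ_1)\times\ell(Γ_2))$, and the dart-reversing involutions agree because inversion in a product group is computed componentwise. This direct check is only a sanity test, and I expect the preferred proof to be the short one mirroring \Cref{coro:Kproduct}.
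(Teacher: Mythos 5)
Your proposal is correct and takes exactly the paper's route: the paper's entire proof is the one-line observation that every right adjoint functor preserves limits, relying on \Cref{prop:ellisrightadjoint} just as you do. Your supplementary hands-on verification via the pointwise description of products in $\Graph$ is a fine sanity check but not part of the paper's argument.
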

\begin{proof}
Every right adjoint functor preserves limits.
\end{proof}

A voltage $α$ on
a graph $G$ is the same thing as a morphism of graphs $α\colon G\to\lG$. Under this identification, a morphism
in $\Volt$ $(f,h)\colon (G,Γ,α)\to (G',Γ',α')$ is the same thing as a commutative square
in $\Graph$
$$
\xymatrix{
G
	\ar[r]^{f}
	\ar[d]_{α}
&
G'
	\ar[d]^{α'}
\\
\lG
	\ar[r]_{\ell(h)}
&
\ell(Γ')
}
$$
and composition of morphisms corresponds to horizontal pasting of such squares. This shows that
the category $\Volt$ is is isomorphic to the comma category $\Graph\downarrow\ell$.

\subsection{Derived voltage graphs}

\begin{definition}\cite{gross2001topological}
\label{def:derived} 
Let $(G,Γ,α)$ be a voltage graph. There is a {\em voltage graph lift of
$(G,Γ,α)$},
denoted by $(G^α,Γ,α')$
\begin{itemize}
\item $V(G^α)=V(G)×Γ$
\item $D(G^α)=D(G)×Γ$
\item $s(d,x)=(s(d),x)$
\item $t(d,x)=(t(d),x.α(d))$
\item $λ(d,x)=(λ(d),x.α(d))$
\item $α'(d,x)=α(d)$
\end{itemize}
\end{definition}

Let us remark that in the original definition in \cite{gross1974voltage}, the
voltage graph lift of a voltage graph is just a graph (not a voltage graph).

For every voltage graph $(G,Γ,α)$, there is a morphism $\epsilon_{(G,Γ,α)}$ of graphs
from $G^α$ to $G$ given by the projection $\epsilon_{(G,Γ,α)}^V(v,x)=v$,
$\epsilon_{(G,Γ,α)}^D(d,x)=d$. Clearly, this is a morphism in $\Volt$. We will prove in
the next section that this morphism 
is a component of the counit of an adjunction between $\Volt$ and $\Lab$.

\begin{figure}
\begin{center}
\includegraphics{derived1}
\caption{A $\mathbb Z_3$-voltage graph and its voltage graph lift}
\label{fig:derived}
\end{center}
\end{figure}
\begin{example}
Consider the $\mathbb Z_3$-voltage graph at the bottom of
Figure \ref{fig:derived}; for every edge we draw only one of its
two darts. The voltage graph lift is pictured at the top of the figure.
\end{example}

\subsection{Fibrations and covers}

An {\em in-neighbourhood} $N(v)$ of a vertex $v$ of a graph is the set
of darts with target $v$.
A morphism of graphs $f\colon G'\to G$ is a {\em fibration}
if for every vertex $v\in V(G')$,
$f^E$ restricted to $N(v)$
is a bijection from $N(v)$ to $N(f(v))$.
A fibration is a {\em covering} if and only if it is surjective on vertices.
The following proposition is well-known.

\begin{proposition}\label{prop:pisacovering}
For every voltage graph $(G,Γ,α)$, the canonical projection $p\colon G^α\to G$
given by $p^V(v,x)=v$ and $p^D(d,x)=d$ is a covering.
\end{proposition}

A covering $f\colon G'\to G$ is {\em regular} if there is a group 
$\Gamma$ that acts freely on $G'$ and an isomorphism 
$i\colon G'/\Gamma\to G$ such that $i\circ \omega_\Gamma=f$, where
$\omega_\Gamma\colon G'\to G'/\Gamma$ is the quotient map of the action.

For every voltage graph $(G,Γ,α)$, $Γ$ acts freely on $G^\alpha$ and the
canonical projection $p\colon G^α\to G$ is a regular covering associated with this
action. Moreover, it can be proved that every regular covering $f\colon G'\to
G$ is isomorphic to the canonical projection for some voltage
on $G$ (see \cite{gross1974voltage} or \cite{gross2001topological},
Theorems 2.2.1 and 2.2.2).

The more general notion of permutation voltage graphs can be used to represent
all coverings of graphs \cite{gross1977generating}.

\section{The adjunction between $\Volt$ and $\Lab$}

Consider a group labeled graph $(G,Γ,β)$. If we want to construct a voltage
graph from $(G,Γ,β)$, it is natural to equip darts of $G$ with a voltage given
by the ``quotient'' of labels along the edge. Formally, there is a voltage
graph $L(G,Γ,β)=(G,Γ,α)$, with the voltage $α$ given by the rule
$α(d)=β(s(d))^{-1}β(t(d))$, see Figure \ref{fig:Lfunctor}. For an abelian
group $Γ$, this construction of a voltage graph from a $Γ$-labeled graph is
well-known in the theory of flows on graphs \cite[Chapter II]{bollobas1998modern}; in this
context, the vertex labels are called {\em potentials}. It is clear that every
$L(G,Γ,β)$ satisfies the Kirchhoff laws. Moreover, it is easy to check that
every voltage graph $(G,Γ,α)$ that satisfies the Kirchhoff laws is in the range
of the functor $L$.

\begin{figure}
\begin{center}
\includegraphics{left}
\caption{A $\mathbb Z_3$-voltage graph from a $\mathbb Z_3$-labeled graph}
\label{fig:Lfunctor}
\end{center}
\end{figure}

For every group $Γ$, there is a morphism of graphs $q_Γ\colon\KG\to\lG$; $q_Γ^V$ is the
only possible map and $q_Γ^D(u,v)=u^{-1}v$. 

If we
identify $\Lab\simeq\Graph\downarrow\K$ and $\Volt\simeq\Graph\downarrow\ell$, 
then $L(G,Γ,β)$ is the voltage graph $(G,Γ,q_Γ\circ β)$.
$$
\xymatrix{
G
	\ar[rd]^{q_Γ\circ β}
	\ar[d]_{β}
\\
\KG
	\ar[r]_{q_Γ}
&
\lG
}
$$
So in what follows, we sometimes write $L(β)$ for $q_Γ\circ β$.

\begin{proposition}
\label{prop:qisnat}
The family of morphisms $\{q_Γ\}_{Γ\in\obj(\Grp)}$ is a natural transformation from $\K$ to $\ell$.
\end{proposition}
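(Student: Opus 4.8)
The plan is to check the naturality condition directly from the definitions of $q_Γ$, $\K$, and $\ell$. I would fix an arbitrary group homomorphism $h\colon Γ\to Γ'$ and show that the square
\[
\xymatrix{
\KG
	\ar[r]^{q_Γ}
	\ar[d]_{\K(h)}
&
\lG
	\ar[d]^{\ell(h)}
\\
\K(Γ')
	\ar[r]_{q_{Γ'}}
&
\ell(Γ')
}
\]
commutes in $\Graph$. Because a morphism in $\Graph$ consists of a vertex map together with a dart map, two parallel graph morphisms are equal precisely when they agree on vertices and on darts; it therefore suffices to verify commutativity of the square separately in each component.

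The vertex component should require no work: the target graph $\ell(Γ')$ has a single vertex, so any two parallel maps into $V(\ell(Γ'))$ automatically agree, and the vertex square commutes for trivial reasons. All of the content therefore lies in the dart component. Here I would take a dart $(u,w)\in D(\KG)=Γ×Γ$ and trace it around both legs of the square. Along the top edge followed by the right edge, $q_Γ$ sends $(u,w)$ to $u^{-1}w$ and then $\ell(h)$ applies $h$, giving $h(u^{-1}w)$. Along the left edge followed by the bottom edge, $\K(h)=\k(U(h))$ sends $(u,w)$ to $(h(u),h(w))$ and then $q_{Γ'}$ forms the quotient $h(u)^{-1}h(w)$.

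The step I expect to be the crux --- though it is hardly an obstacle --- is equating these two expressions, which comes down to the identity $h(u^{-1}w)=h(u)^{-1}h(w)$. This holds precisely because $h$ is a group homomorphism and so preserves products and inverses; this is exactly the hypothesis that $h$ is a morphism in $\Grp$. With both components checked for arbitrary $h$, the family $\{q_Γ\}$ satisfies all the naturality squares and is therefore a natural transformation $\K\to\ell$.
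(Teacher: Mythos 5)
Your proposal is correct and follows essentially the same route as the paper's proof: the vertex component is dispatched by noting $\ell(Γ')$ has a single vertex, and the dart component reduces to the homomorphism identity $h(u^{-1}w)=h(u)^{-1}h(w)$, exactly as in the paper (your square is merely the transpose of the paper's, which expresses the same commutativity condition).
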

\begin{proof}
Let $h\colon Γ\to Γ'$ be a morphism of groups. We need to prove that the 
{\em naturality square at $h$}
$$
\xymatrix{
\K(Γ)
	\ar[r]^{\K(h)}
	\ar[d]_{q_Γ}
&
\K(Γ')
	\ar[d]^{q_{Γ'}}
\\
\ell(Γ)
	\ar[r]_{\ell(h)}
&
\ell(Γ')
}
$$
commutes. For vertex components of the morphisms, this is trivial
because $\ell(Γ')$ has only one vertex. For every $d\in D(\K(Γ))$, that means,
$d=(u,v)\in Γ×Γ$ we can compute
\begin{align*}
(\ell(h))^D(q_Γ^D(u,v))&=(\ell(h))^D(u^{-1}v)=h(u^{-1}v)=(h(u))^{-1}h(v)\\
q_{Γ'}^D(\K(h)^D(u,v))&=q_{Γ'}^D(h(u),h(v))=(h(u))^{-1}h(v)
\end{align*}
\end{proof}

Let $(f,h)\colon(G,Γ,β)\to(G',Γ',β')$ be a morphism in $\Lab$.
Consider the diagram

\begin{equation}
\xymatrix{
G
	\ar[r]^f
	\ar[d]_{β}
	\putmycell{rd}{cell:fhmorphinside}
&
G'
	\ar[d]_{β'}
\\
\K(Γ)
	\ar[r]_{K(h)}
	\ar[d]^{q_Γ}
	\putmycell{rd}{cell:natinside}
&
\K(Γ')
	\ar[d]^{q_{Γ'}}
\\
\ell(Γ)
	\ar[r]_{\ell(h)}
&
\ell(Γ')
}
\label{eq:Labmorphism}
\end{equation}

Since $(f,h)$ is a morphism in $\Lab$, the square \mycellref{cell:fhmorphinside} commutes.
By \Cref{prop:qisnat}, the square \mycellref{cell:natinside} commutes.  The left and right vertical
composites $q_Γ\circ β$ and $q_{Γ'}\circ β'$ are $L(β)$ and $L(β')$, respectively.  So the
whole diagram \eqref{eq:Labmorphism} commutes and we see that $(f,h)$ is a morphism from
$L(β)$ to $L(β')$ in $\Volt$.  Thus, we may put $L(f,h)=(f,h)$, and it is then 
clear that $L$ is a functor.

\begin{theorem}
$L$ is a left adjoint functor.
\end{theorem}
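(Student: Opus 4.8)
The plan is to construct an explicit right adjoint $R\colon\Volt\to\Lab$ together with a counit $\epsilon\colon LR\to\id_{\Volt}$, and then to verify the universal property in the counit form of the definition of an adjunction recalled in the preliminaries. The guiding observation is that, under the identifications $\Lab\simeq\Graph\downarrow\K$ and $\Volt\simeq\Graph\downarrow\ell$, the functor $L$ is simply post-composition with the natural transformation $q\colon\K\to\ell$ of \Cref{prop:qisnat}. Fibrewise, over a fixed group $\Gamma$, $L$ is the composition functor $\Graph/\KG\to\Graph/\lG$ along $q_\Gamma$, whose right adjoint is well known to be pullback along $q_\Gamma$. This suggests defining, for a voltage graph $(G,\Gamma,\alpha)$ (that is, a morphism $\alpha\colon G\to\lG$),
\[
R(G,\Gamma,\alpha)=\bigl(\pb{G}{\lG}{\KG},\ \Gamma,\ q_\Gamma^*(\alpha)\bigr),
\]
where the pullback projection $q_\Gamma^*(\alpha)\colon\pb{G}{\lG}{\KG}\to\KG$ is exactly a $\Gamma$-labeling of the graph $\pb{G}{\lG}{\KG}$.

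First I would compute this pullback pointwise, using the pointwise description of pullbacks of graphs together with the formula $q_\Gamma^D(u,v)=u^{-1}v$. Since $\lG$ has a single vertex, the vertex set is $V(G)\times\Gamma$, and a short calculation shows that the darts, source, target and $\lambda$ coincide with those of the derived graph $G^\alpha$ of \Cref{def:derived}. This identification simultaneously establishes the promised statement that the underlying graph of $LR(G,\Gamma,\alpha)$ is the derived graph, and identifies the other projection $\alpha^*(q_\Gamma)\colon\pb{G}{\lG}{\KG}\to G$ with the map $(v,x)\mapsto v$, $(d,x)\mapsto d$. I would therefore take the counit to be
\[
\epsilon_{(G,\Gamma,\alpha)}=\bigl(\alpha^*(q_\Gamma),\ \id_\Gamma\bigr),
\]
which is precisely the morphism $G^\alpha\to G$ singled out after \Cref{def:derived}.

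The heart of the argument is the universal property. Given an object $(G',\Gamma',\beta)$ of $\Lab$ and a morphism $f=(f_0,h)\colon L(G',\Gamma',\beta)\to(G,\Gamma,\alpha)$ of $\Volt$, the equation $\epsilon_{(G,\Gamma,\alpha)}\circ L(u)=f$, together with the requirement that $u$ be a morphism of $\Lab$, forces the group component of $u$ to be $h$ and forces the two legs of the sought graph morphism $u_0\colon G'\to\pb{G}{\lG}{\KG}$ to be $f_0$ (into $G$) and $\K(h)\circ\beta$ (into $\KG$). The universal property of the pullback then yields a unique such $u_0$, and hence a unique $u$, \emph{provided} the cone commutes, i.e.\ provided $\alpha\circ f_0=q_\Gamma\circ\K(h)\circ\beta$. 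This reconciliation of the two groups is the main obstacle: being a morphism of $\Volt$, $f$ only gives us $\alpha\circ f_0=\ell(h)\circ q_{\Gamma'}\circ\beta$. The two expressions agree precisely because of the naturality square $q_\Gamma\circ\K(h)=\ell(h)\circ q_{\Gamma'}$ of \Cref{prop:qisnat}; thus \Cref{prop:qisnat} is exactly what glues the fibrewise adjunctions into one adjunction over $\Grp$.

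Finally, I would observe that exhibiting such a couniversal arrow $\epsilon_{(G,\Gamma,\alpha)}$ for every object determines $R$ uniquely as a functor for which $\epsilon$ is the counit, so that $L\isleftadjoint R$; alternatively one can define $R$ on morphisms directly by functoriality of pullbacks, the required comparison maps again existing by the naturality of $q$. I expect the pointwise identification of the pullback with $G^\alpha$ and the commutativity check to be entirely routine, with the only genuine subtlety being the change-of-group compatibility described above.
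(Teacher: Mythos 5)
Your proposal is correct and follows essentially the same route as the paper: the same right adjoint $R(G,\Gamma,\alpha)=\bigl(\pb{G}{\lG}{\KG},\Gamma,q_\Gamma^*(\alpha)\bigr)$, the same counit $(\alpha^*(q_\Gamma),\id_\Gamma)$, and the same key step of using the naturality of $q$ from \Cref{prop:qisnat} to turn the $\Volt$-morphism condition into the commuting cone needed for the pullback's universal property, which then forces existence and uniqueness of the mediating $\Lab$-morphism. The only cosmetic differences are that you identify the pullback with the derived graph $G^\alpha$ up front (the paper defers this to \Cref{thm:LRisderived}) and that you invoke the general fact that couniversal arrows determine $R$ on morphisms, where the paper spells out the comparison map explicitly.
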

\begin{proof}

Let us describe a right adjoint functor $R\colon \Volt\to\Lab$ 
associated to the functor $L$. For every voltage graph $(G,Γ,α)$, we put
$R(G,Γ,α)=(\pb{G}{\ell(Γ)}{\K(Γ)},Γ,q_Γ^*(α))$:
\begin{equation}
\label{diag:Rpullback}
\xymatrix{
\pb{G}{\ell(Γ)}{\K(Γ)}
	\ar[r]^-{α^*(q_Γ)}
	\ar[d]_{q_Γ^*(α)}
&
G
	\ar[d]^{α}
\\
\KG
	\ar[r]_-{q_Γ}
&
\lG
}
\end{equation}

To specify $R$ on morphisms we use the fact that pullback is a
limit. In detail, let 
$$
(f,h)\colon(G,Γ,α)\to (G',Γ',α')
$$ 
be a morphism of
voltage graphs. Consider the diagram in $\Graph$
\begin{equation}
\xymatrix{
\pb{G}{\lG}{\KG}
	\ar[rd]_{α^*(q_Γ)}
	\ar[ddd]_{q_Γ^*(α)}
	\ar@{.>}[rrr]^{u}
	\putmycell{rddd}{cell:Gpullback}
	\putmycell{rrrd}{cell:topsquare}
&
~
&
~
&
\pb{G'}{\ell(Γ')}{\K(Γ')}
	\ar[ddd]^{q_{Γ'}^*(α')}
	\ar[ld]^{α'^*(q_{Γ'})}
\\
~
&
G
	\ar[r]^{f}
	\ar[d]_{α}
	\putmycell{rd}{cell:hmorph}
&
G'
	\ar[d]^{α'}
	\putmycell{rd}{cell:Gprimepullback}
&
~
\\
~
&
\lG
	\ar[r]_{\ell(h)}
	\putmycell{rd}{cell:naturalityofq}
&
\ell(Γ')
&
~
\\
\KG
	\ar[ur]^{q_Γ}
	\ar[rrr]_{\K(h)}
&
~
&
~
&
\K(Γ')
	\ar[ul]_{q_{Γ'}}
}
\label{diag:cube}
\end{equation}
The cell \mycellref{cell:Gpullback} is a pullback square, the cell
\mycellref{cell:naturalityofq} is a naturality square for $q$ at $h$, and the middle cell
\mycellref{cell:hmorph} is just the $(f,h)$ morphism in $\Volt$. Therefore, the
boundary of the diagram consisting of
\mycellref{cell:Gpullback}, \mycellref{cell:naturalityofq} and \mycellref{cell:hmorph}
commutes, meaning that 
$$
α'\circ f\circ (α^*(q_Γ))=q_{Γ'}\circ\K(h)\circ (q_Γ^*(α)).
$$
Since \mycellref{cell:Gprimepullback} is a pullback square over the span $(α',q_{Γ'})$, there is a unique morphism 
$u$ such that both \mycellref{cell:topsquare} and the outer square of
\eqref{diag:cube} commute. Since the outer square of \eqref{diag:cube} commutes, $(u,h)$ is a
morphism from $R(α)$ to $R(α')$ in $\Lab$, and we may put 
$R(f,h)=(u,h)$. We omit the proof of functoriality of $R$ since it it just a
straightforward exercise in the ``universality of the pullback''.

However, it is also possible to observe that $R$ is a functor 
by describing $u$ explicitly:
$$
u^V(v,x)=\bigl(f^V(v),h(x)\bigr)\quad u^D\bigl(d,(x_1,x_2)\bigr)=\bigl(f^D(d),\bigl(h(x_1),h(x_2)\bigr)\bigr).
$$

To specify the counit, we first note that for a voltage graph
$(G,Γ,α)$, $LR(G,Γ,α)$ is the voltage graph
$(\pb{G}{\lG}{\KG},Γ,q_Γ\circ q_Γ^*(α))$.
For every object $(G,Γ,α)$
of $\Volt$, we define $\epsilon_{(G,Γ,α)}\colon LR(G,Γ,α)\to (G,Γ,α)$ 
to be the morphism $(α^*(q_Γ),\id_Γ)$ in $\Volt$:
\begin{equation}
\xymatrix{
\pb{G}{\lG}{\KG}
	\ar[r]^-{α^*(q_Γ)}
	\ar[d]_{q_Γ^*(α)}
&
G
	\ar[dd]^{α}
\\
\KG
	\ar[d]_{q_Γ}
&
~
\\
\lG
	\ar[r]_{\ell(\id_Γ)}
&
\lG
}
\label{diag:epsilonvolt}
\end{equation}

Note that the commutativity of \eqref{diag:epsilonvolt} 
follows from the commutativity of \eqref{diag:Rpullback}.

To prove that the family of all these $\epsilon_{(G,Γ,α)}$ is a counit of the adjunction
$L\isleftadjoint R$,
we need to prove that for every group labelled graph $(G',Γ',β)$ and
every morphism of voltage graphs 
$$
(f,h)\colon L(G',Γ',β)\to (G,Γ,α')
$$ 
there is
a unique morphism of group labelled graphs 
$(u,w)\colon (G',Γ',β)\to R(G,Γ,α)$ such that
the diagram in $\Volt$
\begin{equation}
\xymatrix{
L(G',Γ',β)
	\ar[rd]^{(f,h)}
	\ar[d]_{L(u,w)}
\\
LR(G,Γ,α)
	\ar[r]_-{\epsilon_{(G,Γ,α)}}
&
(G,Γ,α)
}
\label{eq:adjoint}
\end{equation}
commutes. If such $(u,w)$ exists, then $\epsilon_{(G,Γ,α)}\circ L(u,w)=
(f,h)$ in $\Volt$ implies that $\id_Γ\circ w=h$ in $\Grp$, so $w=h$, and the uniqueness of $w$ is thus
clear. What remains to prove is the existence and uniqueness of $u$, under the assumption $w=h$.

The assumption that $(f,h)$ in \eqref{eq:adjoint} is a morphism of voltage graphs
means that the diagram 
\begin{equation}
\xymatrix{
G'
	\ar[r]^{f}
	\ar[d]_{β}
&
G
	\ar[dd]^{α}
\\
\K(Γ')
	\ar[d]^{q_{Γ'}}
\\
\ell(Γ')
	\ar[r]_{\ell(h)}
&
\ell(Γ)
}
\label{eq:fhmorph}
\end{equation}
in $\Graph$ commutes.
Consider the diagram
\begin{equation}
\xymatrixcolsep{4em}
\xymatrix{
G'
	\ar@/^{1.5pc}/[rrd]^{f}
	\ar@/_{1.5pc}/[rddd]_{β}
	\ar@{.>}[rd]^-{u}
	\putmycell{rddd}{cell:betatopullback}
	\putmycell{rrd}{cell:toptriangle}
\\
~
&
\pb{G}{\lG}{\KG}
	\ar[r]^-{α^*(q_Γ)}
	\ar[d]_{q_Γ^*(α)}
	\putmycell{rd}{cell:epsilonalpha}
&
G
	\ar[rd]^{α}
\\
~
&
\KG
	\ar[r]_{q_Γ}
	\putmycell{rd}{cell:nat}
&
\lG
	\ar[r]_{\l(\id_G)}
&
\lG
\\
~
&
\K(Γ')
	\ar[r]_{q_{Γ'}}
	\ar[u]^{\K(h)}
&
\ell(Γ')
	\ar[u]_{\ell(h)}
}
\label{eq:mainproof}
\end{equation}
The outer border of \eqref{eq:mainproof} is the commutative square \eqref{eq:fhmorph} --
the $(f,h)$ morphism we want to express as in \eqref{eq:adjoint}. 
In particular, we already know that the outer border
of \eqref{eq:mainproof} commutes.
The square \mycellref{cell:nat} commutes because it
is the naturality square for $q$ at $h$. From this, we obtain
$$
q_Γ\circ\K(h)\circ β=α\circ f
$$
and by the universality of the pullback $\pb{G}{\lG}{\KG}$
there is a unique $u\colon G'\to\pb{G}{\lG}{\KG}$ such that
\begin{align*}
q_Γ^*(α)\circ u&=\K(h)\circ β\qquad\mycellref{cell:betatopullback}\\
α^*(q_Γ)\circ u&=f\qquad\mycellref{cell:toptriangle}
\end{align*}
Note that the commutative square \mycellref{cell:betatopullback} is just a $\Lab$-morphism
$(u,h)\colon(G',Γ',\beta)\to R(G,Γ,α)$. Applying the functor $L$ on 
$(u,h)$, that means, pasting of the squares \mycellref{cell:betatopullback} and \mycellref{cell:nat}
gives us a morphism $L(u,h)\colon L(G',Γ',β)\to LR(G,Γ,α)$.
The cell \mycellref{cell:epsilonalpha} is just $\epsilon_{(G,Γ,α)}$. Composing in $\Volt$
$\epsilon_{(G,Γ,α)}\circ L(u,h)$ gives us the $\Volt$-morphism 
$$
(α^*(q_Γ)\circ u,h)\colon L(G',Γ',β)\to α
$$
and we already know that $α^*(q_Γ)\circ u=f$, so
$\epsilon_{(G,Γ,α)}\circ L(u,h)=(f,h)$. It remains to note that we have already proved the uniqueness of $u$.
\end{proof}

Let us examine the structure of $R(G,Γ,α)$ for the case of a single-vertex
graph $G$. By our main result \Cref{thm:LRisderived} and \cite[Theorem
2.2.3]{gross2001topological}, we see that $R(G,Γ,α)$ is a Cayley graph.
However, it is perhaps interesting to describe the behaviour of our construction
in this case.

\begin{definition}
Let $Γ$ be a group and let $S$ be a subset of $Γ$ that is closed under 
taking inverses. The {\em Cayley graph of $Γ$ induced by $S$}
\cite{cayley1878desiderata}) is the graph
$\mathcal C(Γ,S)$ with vertices $V(\mathcal C(Γ,S))=Γ$ and darts
$$
D(\mathcal C(Γ,S))=\{(x,y)\in Γ×Γ:x^{-1}y\in S\}.
$$
The structural maps of $\mathcal C(Γ,S)$ are
\begin{align*}
s(x,y)&=x\\
t(x,y)&=y\\
λ(x,y)&=(y,x)
\end{align*}
\end{definition}
Naturally, every $\mathcal C(Γ,S)$ is a $Γ$-labeled graph, with 
the labeling given by $β(x)=x$.

\begin{corollary}
\label{coro:cayley}
Let $Γ$ be a group and let $S$ be a subset of $Γ$ that is closed with respect
to taking inverses.
Let $(G,Γ,α)$ be a voltage graph with a single vertex $v$, such that $D(G)=S$ and
$α(x)=x$. As a group-labeled graph, $R(G,Γ,α)$ is isomorphic to the Cayley graph 
of $Γ$ induced by $S$.
\end{corollary}
\begin{proof}
Let us compute $R(G,Γ,α)$ as a pullback \eqref{diag:Rpullback}.
Clearly, since $V(\ell(Γ))$ is a singleton,
\begin{equation}
V(\pb{G}{\lG}{\KG})=V(G)×V(\KG)=\{v\}×Γ
\end{equation}
For darts, we can compute a pullback in $\Set$ 
\begin{equation}
D(\pb{G}{\lG}{\KG})=\pb{D(G)}{D(\lG))}{D(\KG)}=\pb{S}{Γ}{(Γ×Γ)}.
\end{equation}
The pullback square in $\Set$ is
$$
\xymatrix{
\pb{S}{Γ}{(Γ×Γ)}
	\ar[r]^-{j^*(m)}
	\ar[d]_-{m^*(j)}
&
S
	\ar[d]^j
\\
Γ×Γ
	\ar[r]_{q_Γ^D}
&
Γ
}
$$
where $j$ is the inclusion of $S$ into $Γ$. We have 
$$
\pb{S}{Γ}{(Γ×Γ)}=\{(u,(x,y)):u\in S\text{ and }x^{-1}y=u\}
$$
The structure maps of the pullback graph are 
\begin{align*}
s(u,(x,y))&=(s(u),s(x,y))=(v,x)\\
t(u,(x,y))&=(t(u),t(x,y))=(v,y)\\
λ(u,(x,y))&=(λ(u),λ(x,y))=(u^{-1},(y,x))
\end{align*}
and its labeling is given by $β(v,x)=x$.
Moreover, note that there are obvious isomorphisms of sets
$\{v\}×Γ\simeq Γ$ and
$$
\{(u,(x,y)):u\in S\text{ and }x^{-1}y=u\}\simeq\{(x,y):x^{-1}y\in S\}
$$
and it is easy to see that this pair of isomorphisms of sets give us an isomorphism
$R(G,Γ,α)\simeq \mathcal C(Γ,S)$ in $\Lab$.
\end{proof}

\begin{theorem}
\label{thm:LRisderived}
For every voltage graph $(G,Γ,α)$,
the graph $LR(G,Γ,α)$ is isomorphic to the voltage graph lift of $(G,Γ,α)$. 
\end{theorem}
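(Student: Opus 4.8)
The plan is to compute the voltage graph $LR(G,Γ,α)$ explicitly and then exhibit a canonical isomorphism onto the derived voltage graph $(G^α,Γ,α')$ of \Cref{def:derived}. The first step is to describe the pullback $\pb{G}{\lG}{\KG}$. Since $\ell(Γ)$ has a single vertex, the pointwise pullback formula for graphs shows that the pullback of vertex sets over a singleton is simply the direct product; as $V(\KG)=Γ$, this gives $V(\pb{G}{\lG}{\KG})=V(G)×Γ$. For the darts, recall that $D(\KG)=Γ×Γ$ with $q_Γ^D(u,v)=u^{-1}v$ and that the dart component of $α$ is the voltage itself. Hence the pullback condition reads $α(d)=u^{-1}v$, equivalently $v=u.α(d)$, so a dart of the pullback is a triple $\bigl(d,(u,v)\bigr)$ subject to $v=u.α(d)$, and the assignment $\bigl(d,(u,v)\bigr)\mapsto(d,u)$ is a bijection $D(\pb{G}{\lG}{\KG})\to D(G)×Γ$. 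This already identifies the underlying vertex and dart sets with those of $G^α$.

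Next I would read the structure maps off the pointwise pullback formulas, recalling that in $\KG$ one has $s(u,v)=u$, $t(u,v)=v$ and $λ(u,v)=(v,u)$. The source of $\bigl(d,(u,v)\bigr)$ is $(s(d),u)$, its target is $(t(d),v)=(t(d),u.α(d))$, and its reversal is $\bigl(λ(d),(v,u)\bigr)$, which under the bijection above corresponds to $(λ(d),v)=(λ(d),u.α(d))$; these are exactly the maps $s$, $t$, $λ$ of \Cref{def:derived}. It then remains to compute the voltage of $LR(G,Γ,α)$. The labeling of $R(G,Γ,α)$ is the leg $q_Γ^*(α)$ of the pullback, that is, the projection to the $\KG$-component, so on vertices it is given by $β(v,u)=u$. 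Applying $L$ to this labeling, i.e.\ sending each dart to the inverse of its source label times its target label, gives the voltage $u^{-1}v$ on $\bigl(d,(u,v)\bigr)$, which by the pullback condition equals $α(d)$, matching $α'(d,u)=α(d)$.

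Collecting these computations, the map that is the identity on $V(G)×Γ$ and sends the dart $\bigl(d,(u,v)\bigr)$ to $(d,u)$ is the required isomorphism of voltage graphs. I expect the only real difficulty to be bookkeeping: keeping track of which coordinate of a $\KG$-dart $(u,v)$ serves as source and which as target, and noticing that the pullback constraint $v=u.α(d)$ is precisely the ``shift by $α(d)$'' that produces the target and dart-reversal formulas of the derived graph. The compatibility of $λ$ is the one point where this constraint is invoked twice --- once to check that $\bigl(λ(d),(v,u)\bigr)$ again satisfies the pullback condition, using $α(λ(d))=α(d)^{-1}$, and once to identify its image under the bijection --- so I would carry out that verification carefully, while the remaining checks are direct substitutions.
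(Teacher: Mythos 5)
Your proposal is correct and follows essentially the same route as the paper's own proof: identify $V(\pb{G}{\lG}{\KG})$ with $V(G)×Γ$ using that $\ell(Γ)$ has one vertex, observe that the pullback condition $q_Γ^D(u,v)=α(d)$ means $v=u.α(d)$, and use the bijection $(d,(u,v))\mapsto(d,u)$ as the isomorphism, with the voltage check reducing to $u^{-1}v=α(d)$. If anything, you are slightly more thorough than the paper, which verifies compatibility of $j$ only with $s$ and $t$ and leaves the $λ$-compatibility (where $α(λ(d))=α(d)^{-1}$ enters) implicit.
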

\begin{proof}
We adopt the notations of \Cref{def:derived} here.
The underlying graph of $LR(G,Γ,α)$ is $\pb{G}{\lG}{\KG}$. Let us examine
its structure and prove that there is an isomorphism $j\colon LR(G)\to G^α$. 

As $V(\lG)$ is a singleton, 
$$
V(\pb{G}{\lG}{\KG})=V(G)×V(\KG)=V(G)×Γ,
$$ 
so $V(LR(G,Γ,α))=V(G^α)$ and we may put $j^V(v,x)=(v,x)$.

For darts, we see that
$$
D(\pb{G}{\lG}{\KG})=\{(d,(x_1,x_2))\in D(G)×D(\KG):q_Γ^D(x_1,x_2)=\alpha(d)\}.
$$
The structure maps of $\pb{G}{\lG}{\KG}$ are
\begin{align*}
s(d,(x_1,x_2))=&(s(d),s(x_1,x_2))=(s(d),x_1)\\
t(d,(x_1,x_2))=&(t(d),t(x_1,x_2))=(t(d),x_2)
\end{align*}
Let us note that $q_Γ^D(x_1,x_2)=x_1^{-1}x_2$, so
$q_Γ^D(x_1,x_2)=α(d)$ is equivalent to
$x_2=x_1α(d)$.
Therefore, the mapping
$j^D\colon D(\pb{G}{\lG}{\KG})\to D(G^α)$ given by
$j^D(d,(x_1,x_2))=(d,x_1)$ is a bijection. Moreover $j=(j^V,j^D)$ is a morphism
of graphs, because
\[
j^V(s(d,(x_1,x_2)))=j^V(s(d),x_1)=s(d,x_1)=s(j^D(d,(x_1,x_2))).
\]
and
\begin{multline*}
j^V(t(d,(x_1,x_2)))=j^V(t(d),x_2)=(t(d),x_2)=\\(t(d),x_1α(d))=t(d,x_1)=
t(j^D(d,(x_1,x_2)))
\end{multline*}

It remains to prove that the graph isomorphism $j$ preserves voltages and is thus a
morphism in $\Volt$.
The voltage of a dart $(d,(x_1,x_2))$ of $LR(G,Γ,α)$ is equal to
\begin{multline*}
q_Γ\bigl(q_Γ^*(α)\bigl(d,(x_1,x_2)\bigr)\bigr)=q_Γ(x_1,x_2)=α(d)=
α'(d,x_1)=α'\bigl(j^D\bigl(d,(x_1,x_2)\bigr)\bigr)
\end{multline*}
\end{proof}

Let us collect some consequences of \Cref{thm:LRisderived}.

Using \Cref{coro:ellproduct}, we may construct a product of a pair of voltage
graphs $(G_1,Γ_1,α_1)$, $(G_2,Γ_2,α_2)$ in $\Volt$ as the voltage graph $(G_1×G_2,Γ_1×
Γ_2,α_1×α_2)$, where $(α_1×α_2)(d_1,d_2)=(α_1(d_1),α_2(d_2))$. Due to
\Cref{coro:Kproduct}, the products in $\Lab$ can be described similarly.
\begin{corollary}
Let $(G_1,Γ_1,α_1)$, $(G_2,Γ_2,α_2)$ be voltage graphs.
Then 
\[
G_1^{α_1}×G_2^{α_2}\simeq(G_1×G_2)^{α_1×α_2}.
\]
\end{corollary}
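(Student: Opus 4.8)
The plan is to prove the isomorphism at the level of the adjunction $L \isleftadjoint R$, turning it into a formal consequence of \Cref{thm:LRisderived}, which identifies the derived voltage graph $G^\alpha$ with $LR(G,\Gamma,\alpha)$. The paragraph preceding the statement already identifies the product of $(G_1,\Gamma_1,\alpha_1)$ and $(G_2,\Gamma_2,\alpha_2)$ in $\Volt$ with the voltage graph $P=(G_1\times G_2,\Gamma_1\times\Gamma_2,\alpha_1\times\alpha_2)$, whose derived voltage graph is exactly the right-hand side $(G_1\times G_2)^{\alpha_1\times\alpha_2}$. Using \Cref{thm:LRisderived} to replace each derived graph by $LR$ of the corresponding voltage graph, the whole statement reduces to producing an isomorphism
\[
LR(P)\simeq LR(G_1,\Gamma_1,\alpha_1)\times LR(G_2,\Gamma_2,\alpha_2),
\]
that is, to showing that the functor $LR$ sends the product $P$ to the product of the factors.

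First I would dispose of $R$. Being a right adjoint, $R$ preserves all limits and in particular binary products, so $R(P)\simeq R(G_1,\Gamma_1,\alpha_1)\times R(G_2,\Gamma_2,\alpha_2)$. Applying $L$ and setting $X_i=R(G_i,\Gamma_i,\alpha_i)$, it then remains to commute $L$ past the product, that is, to establish $L(X_1\times X_2)\simeq L(X_1)\times L(X_2)$.

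This last step is the main obstacle, because $L$ is only known to be a \emph{left} adjoint, and left adjoints need not preserve products. What rescues the argument is the explicit description of $L$ as postcomposition with the natural transformation $q$: under the identifications $\Lab\simeq\Graph\downarrow\K$ and $\Volt\simeq\Graph\downarrow\ell$, the functor $L$ sends a labeling $\beta$ to $q_\Gamma\circ\beta$. I would verify directly that $L$ preserves binary products, the point being that the product isomorphisms $\K(\Gamma_1\times\Gamma_2)\simeq\K(\Gamma_1)\times\K(\Gamma_2)$ and $\ell(\Gamma_1\times\Gamma_2)\simeq\ell(\Gamma_1)\times\ell(\Gamma_2)$ of \Cref{coro:Kproduct} and \Cref{coro:ellproduct} are compatible with $q$. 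Concretely, on darts $q_{\Gamma_1\times\Gamma_2}^D((u_1,u_2),(v_1,v_2))=(u_1^{-1}v_1,u_2^{-1}v_2)$, which under those isomorphisms is precisely $q_{\Gamma_1}^D\times q_{\Gamma_2}^D$. Hence postcomposing a product labeling with $q_{\Gamma_1\times\Gamma_2}$ agrees factorwise with postcomposing each labeling with $q_{\Gamma_i}$, and $L$ carries the product labeling to the product voltage; this yields $L(X_1\times X_2)\simeq L(X_1)\times L(X_2)$.

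Chaining the three isomorphisms then closes the argument:
\[
(G_1\times G_2)^{\alpha_1\times\alpha_2}\simeq LR(P)\simeq L(X_1\times X_2)\simeq L(X_1)\times L(X_2)\simeq G_1^{\alpha_1}\times G_2^{\alpha_2}.
\]
The only genuine content is the product-preservation of $L$; the remaining moves are bookkeeping with \Cref{thm:LRisderived} and the limit-preservation of the right adjoint $R$.
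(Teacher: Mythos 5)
Your proposal is correct, but it takes a longer route than the paper's own proof, which never leaves $\Lab$. The paper computes $R$ of the product $P=(G_1\times G_2,\Gamma_1\times\Gamma_2,\alpha_1\times\alpha_2)$ in two ways: once directly, where by \Cref{thm:LRisderived} its underlying graph is the derived graph $(G_1\times G_2)^{\alpha_1\times\alpha_2}$, and once via the fact that the right adjoint $R$ preserves products, combined with the componentwise description of products in $\Lab$ coming from \Cref{coro:Kproduct}, which exhibits the underlying graph as $G_1^{\alpha_1}\times G_2^{\alpha_2}$; comparing underlying graphs finishes the proof. In particular the paper never needs your key extra step, the product-preservation of $L$ --- that is precisely what is avoided by not pushing back into $\Volt$. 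Your detour is nevertheless sound: since $L$ is postcomposition with $q$ and leaves underlying graphs unchanged, your computation $q_{\Gamma_1\times\Gamma_2}^D\bigl((u_1,u_2),(v_1,v_2)\bigr)=(u_1^{-1}v_1,u_2^{-1}v_2)$ correctly shows that $q$ is compatible with the canonical decompositions of \Cref{coro:Kproduct} and \Cref{coro:ellproduct}, hence $L(X_1\times X_2)\simeq L(X_1)\times L(X_2)$ holds even though left adjoints have no general right to preserve limits. What your version buys is a slightly stronger and cleaner conclusion: the composite $LR$ preserves binary products, so the isomorphism is manifestly one of voltage graphs in $\Volt$, voltages included, whereas the paper's chain of isomorphisms lives in $\Lab$ and leaves the transfer back to $\Volt$ implicit. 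What the paper's version buys is economy: two applications of already-established facts and no new lemma.
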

\begin{proof}
Let us compute
\begin{multline*}
R((G_1,Γ_1,α_1)×(G_2,Γ_2,α_2))\simeq 
R(G_1×G_2,α_1×α_2,Γ_1×Γ_2)\simeq\\
((G_1×G_2)^{α_1×α_2},Γ_1×Γ_2,q_{Γ_1×Γ_2}^*(α_1×α_2))
\end{multline*}
Since $R$ is a right adjoint functor, it preserves limits, therefore
\begin{multline*}
R((G_1,Γ_1,α_1)×(G_2,Γ_2,α_2))\simeq 
R(G_1,Γ_1,α_1)×R(G_2,Γ_2,α_2)\simeq\\
(G_1^{α_1},Γ_1,q_{Γ_1}^*(α_1))×(G_2^{α_2},Γ_2,q_{Γ_2}^*(α_2))\simeq
(G_1^{α_1}×G_2^{α_2},Γ_1×Γ_2,q_{Γ_1}^*(α_1)×q_{Γ_2}^*(α_2))
\end{multline*}
\end{proof}

A nice characterization of a fibration using pullback can be found in
\cite{boldi2002fibrations} (the authors attribute this observation to
Frank Piessens) a morphism $f\colon G\to G'$ is a fibration if and only if the
square
\[
\xymatrix{
D(G)
	\ar[r]^t
	\ar[d]_{f^D}
&
V(G)
	\ar[d]^{f^V}
\\
D(G')
	\ar[r]_t
&
V(G')
}
\]
is a pullback in $\Set$. The proof of the following theorem is then an
easy consequence of the so-called {\em two-pullbacks lemma}.
\begin{theorem}\cite[Theorem 45]{boldi2002fibrations}
\label{thm:fibrationpullback}
A pullback of a fibration in $\Graph$ along an arbitrary morphism is a fibration.
\end{theorem}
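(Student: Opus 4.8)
The plan is to argue entirely at the level of the pullback characterization recalled just above the statement: a graph morphism is a fibration precisely when its \emph{target square} --- the square having the two target maps as horizontal edges and the morphism's vertex- and dart-components as vertical edges --- is a pullback in $\Set$. So let $f\colon G\to G'$ be a fibration, let $g\colon H\to G'$ be an arbitrary morphism of graphs, and form the pullback $P=\pb{G}{G'}{H}$ with projections $\pi_G\colon P\to G$ and $\pi_H\colon P\to H$. What I must show is that $\pi_H$ is a fibration, i.e. that the target square of $\pi_H$ is a pullback in $\Set$. The two tools are the pointwise computation of pullbacks in $\Graph$ (so that both $V(P)=\pb{V(G)}{V(G')}{V(H)}$ and $D(P)=\pb{D(G)}{D(G')}{D(H)}$ are pullbacks in $\Set$) and the two-pullbacks lemma.

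First I would establish that a certain mixed rectangle running from $D(P)$ to $V(G')$ is a pullback. To this end, paste the dart-level pullback square of $P$ (on the left) with the target square of $f$ (on the right):
\[
\xymatrix{
D(P)
	\ar[r]^-{\pi_G^D}
	\ar[d]_{\pi_H^D}
&
D(G)
	\ar[r]^-{t}
	\ar[d]|{f^D}
&
V(G)
	\ar[d]^{f^V}
\\
D(H)
	\ar[r]_-{g^D}
&
D(G')
	\ar[r]_-{t}
&
V(G')
}
\]
The left square is a pullback because pullbacks in $\Graph$ are computed pointwise, and the right square is a pullback because $f$ is a fibration. Hence, by the easy (compositional) direction of the two-pullbacks lemma, the outer rectangle is a pullback in $\Set$.

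Next I would recognize this very rectangle as the horizontal pasting of the target square of $\pi_H$ (on the left) with the vertex-level pullback square of $P$ (on the right):
\[
\xymatrix{
D(P)
	\ar[r]^-{t}
	\ar[d]_{\pi_H^D}
&
V(P)
	\ar[r]^-{\pi_G^V}
	\ar[d]|{\pi_H^V}
&
V(G)
	\ar[d]^{f^V}
\\
D(H)
	\ar[r]_-{t}
&
V(H)
	\ar[r]_-{g^V}
&
V(G')
}
\]
That the two outer rectangles coincide is exactly the identities $t\circ\pi_G^D=\pi_G^V\circ t$ and $t\circ g^D=g^V\circ t$, which hold because $\pi_G$ and $g$ are morphisms of graphs. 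In this second pasting the right square is again a pullback (pointwise computation), and the outer rectangle is a pullback (just established), so the cancellation direction of the two-pullbacks lemma forces the left square --- the target square of $\pi_H$ --- to be a pullback. By the characterization, $\pi_H$ is a fibration, as desired.

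The only genuine subtlety is bookkeeping: one must verify that the two pasted rectangles really share the same boundary, which is precisely where the graph-morphism naturality of the target map enters, and one must invoke the two directions of the two-pullbacks lemma in the right order --- the compositional direction to produce the mixed rectangle, and the nontrivial cancellation direction to extract the target square of $\pi_H$. Everything else reduces to the pointwise description of pullbacks in $\Graph$ already recorded above.
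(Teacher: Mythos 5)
Your proof is correct and follows exactly the route the paper indicates: it combines the Piessens pullback characterization of fibrations with the pointwise computation of pullbacks in $\Graph$ and the two directions of the two-pullbacks lemma, applied in the right order. The paper itself only sketches this argument (citing it as an easy consequence of the two-pullbacks lemma), and your write-up is a faithful, correctly detailed filling-in of that sketch.
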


From this, we obtain a new proof of the fact that the canonical projection
$p_α\colon G^α\to G$ is a covering.

\begin{proof}[Proof of \Cref{prop:pisacovering}]
By \Cref{thm:LRisderived}, $G^α$ is isomorphic to the pullback $\pb{G}{\lG}{\KG}$.
The morphism $q_Γ\colon\K(Γ)\to\ell(Γ)$ is a fibration
and it is clear that $p$ is isomorphic to $α^*(q_Γ)\colon\pb{G}{\lG}{\KG}\to G$. By \Cref{thm:fibrationpullback},
$p$ is then a fibration. Clearly, $p^V$ is surjective, so $p$ is a covering.
\end{proof}

\section{Conclusion}

Let us outline a possible direction for future research, concerning the voltage graphs
and the voltage graph lift construction.

\subsection{Group actions instead of groups}

For every group $Γ$, one can construct the category $\Act(Γ)$ of all
actions of $Γ$ on sets, equipped with equivariant maps. Moreover, for every
morphism of groups $h\colon Γ_1\to Γ_2$ there is an obviously 
defined functor $\Act(h)\colon\Act(Γ_2)\to\Act(Γ_1)$ and this gives us
a functor $\Act\colon\Grp^{op}\to\Cat$.

Let us mention that the categories $\Act(Γ)$ are sometimes called {\em
permutational categories} \cite{jones2015combinatorial} and figure prominently
in the theory of cellular embeddings of graphs
\cite{jones1978theory,bryant1985foundations}. 

From the functor $\Act$, we may construct the category $\int\Act$ of
all group actions on sets, via the Grothendieck
construction.

In this context, the $\K$ functor then naturally generalizes to the {\em action
groupoid} \cite{brown2006topology} of a given group action $\odot\colon X\times
Γ\to X$, considered as a graph.  The morphism $q_Γ$ can be generalized to a
morphism $q_{(X,Γ,\odot)}$ (an interested reader can fill in the details here). 

The pullback of a voltage $α\colon G\to\ell(Γ)$ along
the $q_{(X,Γ,\odot)}$ morphism is then a generalization of the permutation voltage graph lift construction
\cite{gross1977generating}, sometimes called a {\em voltage space}
\cite{malnivc2000lifting}.
It seems that this more general construction arises from an adjunction, as
well.

\subsection{Bifibrational viewpoint}

The obvious projection functors $\Volt\to\Grp$ and $\Lab\to\Grp$ arise as a
pullback (in the category of all categories) of the codomain fibration
\[\mathrm{cod}\colon\Graph^{\rightarrow}\to\Graph \] along the functors $\K$
and $\ell$. Therefore, both projection functors are Grothendieck fibrations.
It is easy to prove that the functors are cofibrations as well.
In this context, it would be interesting to examine 
the properties of the $q$ transformation. One could then attempt to apply the well
established theory of indexed categories/fibrations
\cite{jacobs1999categorical} and possibly even categorical logic to better
understand the voltage graph lifts.

\begin{acknowledgements}
The author is indebted to both anonymous referees for their valuable comments 
that helped to improve the paper.
\end{acknowledgements}


\begin{thebibliography}{10}

\bibitem{boldi2002fibrations}
Paolo Boldi and Sebastiano Vigna.
\newblock Fibrations of graphs.
\newblock {\em Discrete Mathematics}, 243(1-3):21--66, 2002.

\bibitem{bollobas1998modern}
B{\'e}la Bollob{\'a}s and Bela Bollobas.
\newblock {\em Modern graph theory}, volume 184.
\newblock Springer Science \& Business Media, 1998.

\bibitem{brankovic1998large}
Ljiljana Brankovic, Mirka Miller, J{\'a}n Plesn{\'\i}k, Joe Ryan, and Jozef {\v
  S}ir{\'a}{\v n}.
\newblock Large graphs with small degree and diameter: A voltage assignment
  approach.
\newblock {\em Australasian Journal of Combinatorics}, 18:65--76, 1998.

\bibitem{brankovic1998note}
Ljiljana Brankovi{\'c}, Mirka Miller, J{\'a}n Plesn{\'\i}k, Joe Ryan, and Jozef
  {\v{S}}ir{\'a}{\v{n}}.
\newblock A note on constructing large {C}ayley graphs of given degree and
  diameter by voltage assignments.
\newblock {\em The Electronic Journal of Combinatorics}, page~R9, 1998.

\bibitem{brown2006topology}
Ronald Brown.
\newblock Topology and groupoids, 2006.

\bibitem{bryant1985foundations}
Robin~P Bryant and David Singerman.
\newblock Foundations of the theory of maps on surfaces with boundary.
\newblock {\em The Quarterly Journal of Mathematics}, 36(1):17--41, 1985.

\bibitem{cayley1878desiderata}
Arthur Cayley.
\newblock Desiderata and {s}uggestions: {N}o. 2. {T}he {T}heory of {G}roups:
  {G}raphical {R}epresentation.
\newblock {\em American Journal of Mathematics}, 1(2):174--176, 1878.

\bibitem{gross1974voltage}
Jonathan~L Gross.
\newblock Voltage graphs.
\newblock {\em Discrete mathematics}, 9(3):239--246, 1974.

\bibitem{gross1977generating}
Jonathan~L Gross and Thomas~W Tucker.
\newblock Generating all graph coverings by permutation voltage assignments.
\newblock {\em Discrete Mathematics}, 18(3):273--283, 1977.

\bibitem{gross2001topological}
Jonathan~L Gross and Thomas~W Tucker.
\newblock {\em Topological graph theory}.
\newblock Courier Corporation, 2001.

\bibitem{jacobs1999categorical}
Bart Jacobs.
\newblock {\em Categorical logic and type theory}.
\newblock Elsevier, 1999.

\bibitem{jones2015combinatorial}
Gareth~A Jones.
\newblock Combinatorial categories and permutation groups.
\newblock {\em Ars Mathematica Contemporanea}, 11, 2015.

\bibitem{jones1978theory}
Gareth~A Jones and David Singerman.
\newblock Theory of maps on orientable surfaces.
\newblock {\em Proceedings of the London Mathematical Society}, 3(2):273--307,
  1978.

\bibitem{mac1998categories}
Saunders~Mac Lane.
\newblock {\em Categories for the Working Mathematician}.
\newblock Number~5 in Graduate Texts in Mathematics. Springer-Verlag, 1971.

\bibitem{malnivc2000lifting}
Aleksander Malni{\v{c}}, Roman Nedela, and Martin {\v{S}}koviera.
\newblock Lifting graph automorphisms by voltage assignments.
\newblock {\em European Journal of Combinatorics}, 21(7):927--947, 2000.

\bibitem{riehl2016category}
Emily Riehl.
\newblock {\em Category theory in context}.
\newblock Courier Dover Publications, 2016.

\end{thebibliography}

\end{document}